\documentclass[letterpaper, 10pt, conference]{ieeeconf}
\usepackage{cite}
\usepackage{xcolor}
\IEEEoverridecommandlockouts                              
\let\labelindent\relax
\usepackage{enumerate}
\usepackage{amsthm,amssymb,verbatim, mathtools,enumerate,algorithm}
\usepackage{cite}
\usepackage{diagbox}
\usepackage{times,algpseudocode,tikz}
\usepackage{booktabs}
\usepackage{subcaption}

\allowdisplaybreaks[2]

\newcommand{\reals}{{\mathbb{R}}}

\newcommand{\im}{\mathop{\bf im}} 
\renewcommand{\ker}{\mathop{\bf ker}} 
\newcommand{\vvec}{\mathop{\bf vec}}

\newcommand{\Tr}{\mathop{\bf Tr}}
\newcommand{\diag}{\mathop{\bf diag}}

\newcommand{\argmin}{\mathop{\rm argmin}}

\newcommand{\norm}[1]{\left\lVert#1\right\rVert}
\newcommand{\mnorm}[1]{{\left\vert\kern-0.25ex\left\vert\kern-0.25ex\left\vert #1 
    \right\vert\kern-0.25ex\right\vert\kern-0.25ex\right\vert}}

\newcommand{\mc}{\mathcal}

\newtheorem{theorem}{Theorem}
\newtheorem{lemma}{Lemma}
\newtheorem{corollary}{Corollary}

\newtheorem{proposition}{Proposition}

\newtheorem{example}{Example}

\newcommand{\tc}[2]{\textcolor{#1}{#2}}
\newcommand{\reviewi}[1]{\tc{black}{#1}}
\newcommand{\reviewii}[1]{\tc{black}{#1}}
\newcommand{\reviewiii}[1]{\tc{black}{#1}}

\usepackage{hyperref}
\usepackage[dvipsnames]{xcolor}
\usepackage[normalem]{ulem}

\hypersetup{
    colorlinks=true,
    linkcolor=blue,
    filecolor=magenta,      
    urlcolor=cyan,
}

\newcommand{\rre}{\text{rre}}

\begin{document}
\title{\Large \bf On the Gap Between $H_2$
Optimal Control and Disturbance Decoupling
}
\author{
Ruirui Ma and
Sarah H.Q. Li
\thanks{Ruirui Ma and Sarah~H.~Q.~Li are with Georgia Tech Institute for Robotics and Intelligent Machines (IRIM), Atlanta, GA, USA (emails \{rma75, sarahli\}@ gatech.edu) 
}%
}
\maketitle
\begin{abstract}
We study the relationship between disturbance decoupling (DD) and $H_2$ optimal control for linear time-invariant (LTI) systems, revealing a fundamental gap between DD subspace constraints and semi-definite program (SDP)-based $H_2$ minimization. We show that DD is equivalent to the existence of zero $H_2$ gain without requiring internal stability, whereas SDP-based $H_2$ minimization strictly optimizes over stabilizing controllers and therefore fails to recover DD controllers when the closed-loop dynamics may be marginally stable. Moreover, we show that the trace representation of $H_2$ norms further biases solutions away from complete DD. Motivated by this, we formulate a bilinear matrix inequality (BMI)-constrained optimization program that directly enforces the DD subspace condition to compute DD controllers. We propose a difference-of-convex (DC) iterative algorithm that preserves DD and stability at every iteration, and establish its convergence to Karush--Kuhn--Tucker (KKT) points under standard constraint qualification conditions. Numerical experiments on a four bus power network demonstrate that the proposed algorithm achieves significantly better disturbance rejection while enabling optimization of additional performance metrics. The resulting framework establishes a computationally tractable link between geometric DD theory and optimization-based controller design.

\end{abstract}
\section{Introduction}
Feedback control is often deployed in environments where disturbances propagate across interconnections~\cite{satouri2021disturbance,cerda2012control,lunze2006control}. In multi-agent coordination~\cite{li2020disturbance,altafini2012consensus}, sensor networks~\cite{lebon2025optimal}, and power networks~\cite{Yi2019,Yi2024}, disturbances spread through linkages and degrade global performance. In such settings, it is often not sufficient to attenuate disturbances in an aggregate sense; \reviewiii{instead, one may require that specific performance outputs be decoupled from subsystem disturbances~\cite{wonham1979linear}}.

Geometric control theory provides a complete geometric characterization of DD through invariant subspaces\reviewii{~\cite{lebon2025optimal,lebon2026,basile1992controlled,trentelman2001control}}\reviewiii{,~\cite{marro2002,prattichizzo2008}}. \reviewiii{Graph theoretic methods can characterize the conditions for generic solvability of DD for structured systems~\cite{vanderwoude1994, vanderwoude1996}}. 
While elegant and complete from a structural perspective, existing theory focus on feasibility and do not directly yield optimization-compatible synthesis procedures. As a result, combining DD with additional objectives such as tracking error, convergence rate is challenging within real-time optimal control~\cite{garcia1989model}. In contrast, modern control design for \reviewi{disturbance rejection} is dominated by convex optimization approaches\reviewii{~\cite{eren2017model,Yi2024}}, particularly $H_2, H_\infty$ control\reviewii{~\cite{boyd1994linear,wu2025,schuchert2024}}, which leverages \reviewi{convex programs} to achieve \reviewi{disturbance attenuation} and internal stability. \reviewi{Existing literature on applying optimization to geometric control for disturbance decoupling is sparse~\cite{lebon2025optimal, lebon2026}.} 
This paper establishes a precise connection between these two paradigms \reviewii{ by combining geometric control conditions with optimization-based control synthesis}. We show that DD is equivalent to achieving zero $H_2$ gain from disturbance to output, but this equivalence holds without requiring internal stability. Additionally, we demonstrate how geometric subspace conditions are \emph{better} at enforcing DD than $H_2$ norm minimization within optimization frameworks.  

\textbf{Contributions}: \reviewii{We develop a framework that bridges optimization and $H_2$ control approaches to DD.}
First, we show that DD controllers achieve zero $H_2$ gain from disturbance to output under necessary and sufficient conditions without requiring internal stability. 
Second, we demonstrate that when DD with internal stability is achievable, SDP-based optimization approaches to $H_2$ minimization still do not achieve complete DD due to numerical errors.  
Finally, we use subspace conditions to directly constrain optimal control formulations to form a bilinear matrix inequality (BMI)-constrained optimal control problem that satisfies standard constraint qualifications. We develop a difference-of-convex programming algorithm that solves this problem while guaranteeing internal stability and DD at every iteration. On a power network example, we demonstrate that this algorithm achieves exponentially smaller $H_2$ gains with exponentially less control effort required. 

\section{Literature Review}
DD  originates from geometric control, where feasibility conditions are characterized via controlled-invariant geometric subspaces~\cite{wonham1979linear,basile1992controlled}. These results provide  constructive procedures for state-feedback DD with and without internal stability as invariant subspace procedures rather than optimization programs~\cite{trentelman2001control,Sarsilmaz2024}. Extensions to measurement feedback, structured systems, and stochastic hybrid dynamics retain the geometric flavor and focus primarily on feasibility and structural characterization~\cite{Stoorvogel1992,Monshizadeh2015}. In parallel, modern controller synthesis has evolved toward Lyapunov certificate-constrained optimization frameworks via robust control techniques such as $H_2$ and $H_{\infty}$ control~\cite{zhou1998essentials,stoorvogel1992singular,marino1994nonlinear,feron1992numerical}, which are \emph{almost disturbance decoupling}.  Consequently, there remains a need for optimization-compatible formulations of DD that codesigns stability, convergence rate, and dynamical structure within modern controller synthesis pipelines.
\section{Preliminaries}
\textbf{Notation}. The image and the kernel of matrix $Y \in \reals^{n \times m}$ are denoted by $\im{Y}$ and $\ker{Y}$, respectively.
The Kronecker product of $Y$ and $Y_2 \in \reals^{p \times q}$  is denoted by $Y \otimes Y_2$. 
The $k \times k$ identity matrix is denoted by $I_k$, and  the $i^{th}$ canonical basis vector in $\reals^n$ is $e_i$.
\reviewiii{Optimal solutions are indicated by superscript $(\cdot)^\star$. Reduced row echelon is denoted by $rre$.}

Consider a linear time-invariant (LTI) system~\cite[Chp.  4]{trentelman2001control}, given by 
\begin{equation}\label{eqn:lti}
\begin{aligned}
       \dot x(t) &= Ax(t) + Bu(t) + Ed(t), \\
        z(t) &= Hx(t),  \quad \forall t \in \reals_+,
\end{aligned}
\end{equation}
where $x(t) \in \reals^{n}$ is the state, $u(t) \in \reals^{m}$ is the control input, $z(t) \in \reals^p$ is the output, and $d(t) \in \reals^{\ell}$ is the disturbance. We consider all disturbance functions that are locally integrable functions in time, given by 
\begin{equation}\label{eqn:dd_noise_set}
   \textstyle\mc{D} = \{d:\reals_+\mapsto \reals^\ell  \mid \int_0^T \norm{d(\tau)} d\tau < \infty, \ \forall \ T \in \reals_+ \}. 
\end{equation} 
Set $\mc{D}$ allows disturbances with unbounded total magnitude over the infinite time horizon, while still guaranteeing well-posed system trajectories.
To mitigate effects of $\mc{D}$ on~\eqref{eqn:lti}, we strictly consider linear state-based feedback, $u(t) = Fx(t) \in \reals^m, \ \forall \ t \in \reals_+$, where $F \in \reals^{m\times n}$ is the feedback controller.
The closed-loop dynamics of~\eqref{eqn:lti} under controller $F$ and without any disturbance ($d(t) = 0, \, \forall \, t \in \reals_+$) is given by
\begin{equation}\label{eqn:lti_df}
\dot x_{dd}(t) = A_Fx_{dd}(t), \,\, A_F = A+BF, \,\, z_{dd}(t) = Hx_{dd}(t).
\end{equation}
We say that $F$ is a \textbf{DD controller} if the disturbance-free output $z_{dd}(t)$~\eqref{eqn:lti_df} and the disturbed output $z(t)$~\eqref{eqn:lti}  satisfy 
\begin{equation}\label{eqn:dd_condition}
   \textstyle e(t) = z_{dd}(t) - z(t) = 0, \, \forall \, x_{dd}(0) = x(0), \, \forall \ t \in \reals_+, d \in \mc{D}.
\end{equation}
DD ensures that the transient behavior of the closed-loop LTI system $z_{dd}(t)$ is preserved for all disturbances. In certain applications such as power networks~\cite{Yi2019}, multi-agent learning~\cite{li2020disturbance}, and opinion-dynamic networks~\cite{lebon2025optimal},  preserving smooth transient system response may be more desirable than  complete disturbance rejection. 

In addition to DD control,  we consider controllers that simultaneously  ensure asymptotic stability.  We say that~\eqref{eqn:lti_df}  is \textbf{internally stable} if  $A_F$ is Hurwitz~\cite[Def. 4.31]{trentelman2001control}, i.e., 
\begin{equation}\label{eqn:internal_stab}
    \lim_{t \to \infty} x_{dd}(t) = 0, \quad \forall \, x_{dd}(0) \in \reals^n.
\end{equation}
\reviewiii{A controller $F$ is internally stabilizing and disturbance decoupling \textbf{(DDPF)} if it satisfies both~\eqref{eqn:dd_condition} and~\eqref{eqn:internal_stab}.}

System~\eqref{eqn:lti} has a {DD controller} if and only if   $(A,B,E,H)$ satisfy a geometric subspace  condition given in~\cite[Thm. 4.8]{trentelman2001control}. 
We provide a matrix-based formulation to enable optimization-based approach for controller synthesis later.

\begin{proposition}[\cite{Sarsilmaz2024,trentelman2001control}]\label{prop:la_dd_condition}
A state feedback controller $F\in \reals^{m\times n}$ is a {DD controller} for~\eqref{eqn:lti} if and only if there exist matrices $X\in \reals^{k \times k}$, $V\in \reals^{n \times k}$ such that
\begin{equation}\label{eqn:dd_necessary_sufficient}
\begin{aligned}
 VX - BFV = AV,  \quad \im{E} \subseteq \im{V} \subseteq \ker{H}.
\end{aligned}
\end{equation}
\end{proposition}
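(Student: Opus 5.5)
The plan is to reduce the statement to the classical geometric fact, already cited via \cite[Thm.~4.8]{trentelman2001control}, that $F$ is a DD controller if and only if there is an $A_F$-invariant subspace $\mc{V}$ with $\im E \subseteq \mc{V} \subseteq \ker H$. I will then translate this invariance into the matrix equation in~\eqref{eqn:dd_necessary_sufficient}. I will nonetheless sketch the dynamical part directly so that the argument is self-contained.

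\textbf{Step 1: the error dynamics.} Subtracting~\eqref{eqn:lti_df} from~\eqref{eqn:lti} with equal initial conditions gives $\delta = x - x_{dd}$ satisfying $\dot\delta = A_F\delta + Ed$, $\delta(0)=0$, and $e = -H\delta$. Hence
\[
e(t) = -\int_0^t H e^{A_F(t-\tau)}E\, d(\tau)\, d\tau .
\]
Condition~\eqref{eqn:dd_condition} therefore holds for all $d\in\mc{D}$ if and only if $He^{A_F s}E = 0$ for all $s\ge 0$. Sufficiency is immediate. For necessity, take constant disturbances $d \equiv e_i$ and differentiate in $t$. By Cayley--Hamilton, this is in turn equivalent to $HA_F^{j}E = 0$ for $j=0,\dots,n-1$. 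Equivalently, $\langle A_F \mid \im E\rangle := \im[E,\ A_FE,\ \dots,\ A_F^{n-1}E] \subseteq \ker H$.

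\textbf{Step 2: from invariance to~\eqref{eqn:dd_necessary_sufficient}.} For the ``only if'' direction, let $V$ be a basis matrix (with $k$ columns) of $\mc{V}=\langle A_F\mid \im E\rangle$. This subspace is $A_F$-invariant and satisfies $\im E\subseteq \mc{V}\subseteq\ker H$ by Step 1. Invariance means each column of $A_FV$ lies in $\im V$, so $A_FV = VX$ for some $X\in\reals^{k\times k}$, i.e.\ $VX - BFV = AV$. For the ``if'' direction, $AV+BFV = VX$ shows that $\im V$ is $A_F$-invariant. Combined with $\im E\subseteq\im V$, this gives $\langle A_F\mid\im E\rangle\subseteq \im V\subseteq\ker H$, so $HA_F^jE=0$ for all $j$, and Step 1 yields DD.

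The main obstacle is the necessity argument in Step 1: deducing $He^{A_Fs}E\equiv 0$ from the vanishing of the convolution for every locally integrable $d$. I would handle it with constant inputs. Then $\int_0^t He^{A_F\sigma}E\,d\sigma\, e_i=0$ for all $t$, and differentiating gives $He^{A_Ft}Ee_i=0$. Repeated differentiation at $t=0$ then gives the Markov parameters $HA_F^jE=0$. A minor degenerate case is $E=0$ (or $k=0$), which should be accommodated by allowing empty $V$, or by taking $V=0$ with $k=1$ and $X$ arbitrary.
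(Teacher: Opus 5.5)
Your proof is correct. The convolution and Markov-parameter argument, using constant disturbances $d\equiv e_i\in\mc{D}$, shows that DD for a fixed $F$ is equivalent to $\langle A_F\mid\im E\rangle\subseteq\ker H$, and reading $A_FV=VX$ as $A_F$-invariance of $\im V$ (which needs no rank assumption on $V$) gives both directions.

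The paper gives no proof of its own and defers to \cite[Thm.~4.8]{trentelman2001control} and \cite{Sarsilmaz2024}. Your argument is the standard invariant-subspace reasoning behind those references, made self-contained.
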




In Proposition~\ref{prop:la_dd_condition}, $\im(V)$ is a control invariant set of~\eqref{eqn:lti_df}. Different control invariant sets correspond to different sets of DD controllers; some corresponds to larger sets of DD controllers than others. We refer to~\cite{Sarsilmaz2024} for detailed discussions on control invariant sets as well as algorithms to compute them. Feasibility of DDPF controllers is also well-characterized and provided in~\cite{Sarsilmaz2024}.



\textbf{$\boldsymbol{H_2}$ Norm Minimization}.
An alternative approach for disturbance rejection is via SDP-based $H_2$ norm  minimization~\cite{zhou1998essentials}. When $u(t) = Fx(t)$, the impulse response from disturbance $d(t)$ to  output $z(t)$ in~\eqref{eqn:lti} is given by $g(t)=He^{A_Ft}E$. The corresponding $H_2$ norm~\cite[Sec.4.3]{zhou1998essentials} is given by
\begin{equation}\label{eqn:h2_norm}
\textstyle\norm{g}_{H_2}^2 = \int_{0}^\infty \Tr(g(\tau)^\top g(\tau))d\tau. 
\end{equation}
When $A_F$ is Hurwitz, the $H_2$ norm~\eqref{eqn:h2_norm} is equivalent to $\Tr(E^\top W_o E)$, where $W_o$ is the observability Gramian given by the Lyapunov equation $A_F^\top W_o+W_oA_F^\top=-H^\top H$~\cite[Lem.4.4]{zhou1998essentials}. Additionally,  if $d(t)$ has finite $L_2$ norm, such that  $\int_{0}^\infty \norm{d(\tau)}^2 d\tau <  \infty$, then \reviewiii{the output} $L_2$ norm is \reviewiii{also finite,} $\int_0^\infty \norm{z(\tau)}_2^2 d\tau  < \infty$.
Over stabilizing controllers, $H_2$ norm minimization can be formulated as a SDP, given by
\begin{equation}\label{eqn:h2_LMI}
    \begin{aligned}
    \underset{G, N, P, W}{\min}\quad &  \text{Tr}(W)\\
    \text{s.t. } \quad & 
    \begin{bmatrix}
        G & PH^T & P\\
        HP & I_p & 0\\
        P  & 0 & \epsilon^{-1} I_n
    \end{bmatrix}\succeq 0\\
    & G=-(BN+AP)^\top -(BN+AP)\\
    & \begin{bmatrix}
        W & E^\top\\
        E & P
    \end{bmatrix}\succeq 0,  P\succ 0, \reviewiii{P \in \reals^{n\times n},}
    \end{aligned}
\end{equation}
where the optimal controller $F^\star = N^\star (P^\star)^{-1}$ achieves the optimal $H_2$ norm $\norm{g}_{{H}_2}^2 = \text{Tr}(E^\top W_o E)$~\cite{feron1992numerical}. In the SDP formulation, $H_2$ minimization implicitly assumes asymptotic stability of the closed-loop system $A_F$. 

\section{Bridging $H_2$ minimization and DD}
%
Although the relationship between $H_2$ norm minimization and DD has been known since~\cite{trentelman2001control}, the theoretical and computational gaps between the two remain poorly understood, particularly when computing $H_2$ minimal controllers via the SDP formulation~\eqref{eqn:h2_LMI}. In this section, we compare DD controllers without internal stability and $H_2$-norm-minimizing controllers and reveal a clear gap between the two. 
\subsection{Zero $H_2$ norm and DD equivalence}
Through its usage of Gramians, SDP-based $H_2$ minimization is performed implicitly over controllers that are internally stable.
In fact, having a finite norm does not  strictly require internal stability, and the scenarios where this is true is exactly characterized by DD control. 
We first show that in addition to being necessary and sufficient for the existence of a DD controller, Proposition~\ref{prop:la_dd_condition} is also necessary and sufficient for achieving a zero $H_2$ norm. 
\begin{lemma}\label{lem:dd_implies_h2}
Controller $F \in\reals^{m\times n}$ achieves zero $H_2$ norm~\eqref{eqn:h2_norm} if and only if there exist matrices $X\in \reals^{k \times k}$, $V\in \reals^{n \times k}$ such that~\eqref{eqn:dd_necessary_sufficient} holds. 
\end{lemma}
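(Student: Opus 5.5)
The plan is to pass through the impulse response $g(t)=He^{A_Ft}E$. I will show that zero $H_2$ norm is equivalent to $g\equiv 0$, and that $g\equiv 0$ is equivalent to the subspace condition~\eqref{eqn:dd_necessary_sufficient}. The link between these is the standard characterization of the smallest $A_F$-invariant subspace containing $\im E$, namely the controllable subspace $\langle A_F\mid \im E\rangle=\im[E,\,A_FE,\,\dots,\,A_F^{n-1}E]$.

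\emph{Zero norm $\Leftrightarrow$ $g\equiv0$.} The integrand $\Tr(g(\tau)^\top g(\tau))=\norm{g(\tau)}_F^2$ in~\eqref{eqn:h2_norm} is nonnegative and continuous in $\tau$, since $g$ is real-analytic. Hence the integral vanishes if and only if $g(\tau)=0$ for all $\tau\ge0$. No stability assumption is needed here: the integral is well defined in $[0,\infty]$ for any $A_F$, and a zero value forces the integrand to vanish identically. This is the point of the lemma, since it does not rely on the Gramian formula.

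\emph{$g\equiv0$ $\Leftrightarrow$ $HA_F^kE=0$ for all $k$.} Because $g$ is analytic, $g\equiv0$ holds if and only if every derivative at $\tau=0$ vanishes, that is, $HA_F^kE=0$ for all $k\ge0$. By Cayley--Hamilton it suffices to check $k=0,\dots,n-1$. Equivalently, $\mathcal{V}:=\im[E,\,A_FE,\,\dots,\,A_F^{n-1}E]\subseteq\ker H$.

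\emph{Equivalence with~\eqref{eqn:dd_necessary_sufficient}.} Note that $VX-BFV=AV$ is the same as $A_FV=VX$, which holds for some $X$ exactly when $\im V$ is $A_F$-invariant. For the direction ($\Leftarrow$), let $\im V$ be $A_F$-invariant with $\im E\subseteq\im V\subseteq\ker H$. Then $\im(A_F^kE)\subseteq\im V\subseteq\ker H$ for every $k$, so $HA_F^kE=0$. For the direction ($\Rightarrow$), choose $V$ with columns forming a basis of $\mathcal V$, so $k=\dim\mathcal V$. Then $\mathcal V$ is $A_F$-invariant by Cayley--Hamilton, so $A_FV=VX$ for some $X\in\reals^{k\times k}$. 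Moreover $\im E\subseteq\mathcal V$, and $\mathcal V\subseteq\ker H$ by the previous step. If $E=0$, the lemma is trivial: take $V$ to be any basis of $\ker H$, or allow $k=0$.

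The only delicate step is the first one: justifying zero norm without stability, where the Gramian identity $\Tr(E^\top W_oE)$ is unavailable. The argument above avoids this by working directly with the integral definition~\eqref{eqn:h2_norm} and using continuity together with nonnegativity. One could also simply cite Proposition~\ref{prop:la_dd_condition} for the final step, but the direct invariant-subspace argument is self-contained.
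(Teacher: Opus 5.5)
Your proof is correct. Its first step, where nonnegativity and continuity of $\Tr(g^\top g)$ force $g\equiv 0$ with no stability assumption, is exactly the paper's argument. You differ in how you link $g\equiv 0$ to~\eqref{eqn:dd_necessary_sufficient}. The paper goes through the operational DD definition~\eqref{eqn:dd_condition}: it cites \cite{trentelman2001control} for ``DD $\Rightarrow g\equiv 0$'', asserts ``$g\equiv 0 \Rightarrow$ DD'' without detail (implicitly via the convolution formula for $e(t)$), and relies on Proposition~\ref{prop:la_dd_condition} to pass between DD and the subspace condition. You bypass both the disturbance class $\mc{D}$ and Proposition~\ref{prop:la_dd_condition}, working directly with the Markov parameters $HA_F^kE$ and the smallest $A_F$-invariant subspace containing $\im E$. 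Your route is self-contained and constructive: it exhibits an explicit, minimal $V$ and makes plain that stability never enters. The paper's route is shorter and reads directly as ``zero $H_2$ norm $\Leftrightarrow$ DD'', but it leans on external results.

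There is one small slip in your degenerate case. When $E=0$, a basis of $\ker H$ does not work in general, because $\ker H$ need not be $A_F$-invariant, so $A_FV=VX$ can fail. Your alternative $k=0$ is fine. So is $V=0\in\reals^{n\times 1}$ with any $X$, since~\eqref{eqn:dd_necessary_sufficient} does not require $V$ to have full column rank.
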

\begin{proof}
    To see that DD implies zero $H_2$ norm, if $F$ is a DD controller, $g(t)\equiv0$\cite{trentelman2001control}. Hence, the $H_2$ norm $\norm{g}_{{H}_2}^2=\int_0^\infty \Tr(g(t)^\top g(t))dt=0$. Conversely, if $\norm{g}_{{H}_2}^2 = 0$, then $g$ is zero almost everywhere. In LTI, $g(t)=He^{A_Ft}E$ is a continuous function. Therefore, $g(t)\equiv0$ and $F$ is DD. 
\end{proof}
Lemma~\ref{lem:dd_implies_h2} shows that $H_2$ norm is zero if and only if the controller is DD. Critically, Lemma~\ref{lem:dd_implies_h2} does not require $F$ to be internally stable. \reviewiii{This implies that minimizing $H_2$ via solving~\eqref{eqn:dd_necessary_sufficient} will include  marginally stable controllers, and thus achieve a lower $H_2$} gain than minimizing $H_2$ gain via SDP~\eqref{eqn:h2_LMI}, which strictly optimizes over internally stable controllers. We show below that the difference can be nontrivial.
\begin{example}[Disjoint DD and SDP controllers]\label{ex:1}
In system~\eqref{eqn:lti}, let 
$A = \begin{bmatrix}
    a_1 & a_2 & a_3
\end{bmatrix}$, $a_1^\top = [0 \,\, -1 \,\, 1]$, $a_2^\top = [1 \,\, -1 \,\, 0]$, $a_3^\top= [0 \,\, 0 \,\, 1]$,
    $B=e_2$, $E=e_3$, $H=e_1^\top$. Its control invariant DD subspace is $V = e_3$. From~\eqref{eqn:dd_necessary_sufficient}, we find a DD controller $F_{dd} = \begin{bmatrix}
        0&0&0
    \end{bmatrix}^\top$ with zero $H_2$ norm~\eqref{eqn:h2_norm}.
    Solving~\eqref{eqn:h2_LMI}, we find a controller $F_{H_2}=\begin{bmatrix}
        -8.1e5 & -8.0e1 & -1.6e6
    \end{bmatrix}^\top$ whose $H_2$ norm is $2.0$. The closed-loop dynamics of $A+BF_{dd}$ is marginally stable. 
\end{example}


\subsection{SDP and DD equivalence under internal stability}
Next, we demonstrate that over the set of finite and \emph{internally stabilizing} controllers, the SDP formulation~\eqref{eqn:h2_LMI} and the DD condition~\eqref{eqn:dd_necessary_sufficient} have the same solution set. 
\begin{proposition}\label{prop:h2_dd_condition}
The SDP~\eqref{eqn:h2_LMI} has a zero $H_2$ norm at optimal solution $N^\star, P^\star, W^\star$ if and only if $F^\star = N^\star (P^\star)^{-1}$ satisfies~\eqref{eqn:dd_necessary_sufficient} and $(A+BF^\star)$ is Hurwitz.
\end{proposition}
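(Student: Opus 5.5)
The plan is to prove the two directions separately. Throughout, I read ``the SDP~\eqref{eqn:h2_LMI} has a zero $H_2$ norm at the optimum'' as: the optimal value $\Tr(W^\star)$ equals the $H_2$ norm $\Tr(E^\top W_o E)$ of the closed loop under $F^\star = N^\star(P^\star)^{-1}$, and this quantity is zero. I will connect both directions to Lemma~\ref{lem:dd_implies_h2}, which already handles the equivalence between zero $H_2$ norm and~\eqref{eqn:dd_necessary_sufficient}. What remains is to track internal stability and the passage between the SDP variables and the Gramian.

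\emph{Forward direction.} Suppose the optimum attains zero $H_2$ norm.
\begin{enumerate}
\item The constraint $P^\star \succ 0$ together with $G \succeq 0$ (a principal block of the first LMI) gives
\[
(A+BF^\star)P^\star + P^\star(A+BF^\star)^\top \preceq -P^\star H^\top H P^\star - \epsilon P^\star P^\star \prec 0 .
\]
This follows from the Schur complement of the first LMI with respect to its lower-right blocks $I_p$ and $\epsilon^{-1}I_n$.
\item By Lyapunov's theorem, the strict inequality in step 1 makes $A+BF^\star$ Hurwitz. The $\epsilon$-block is exactly what supplies strictness.
\item Since $A+BF^\star$ is Hurwitz, the $H_2$ norm~\eqref{eqn:h2_norm} is finite and equals $\Tr(E^\top W_oE)$.
\item This value is zero, so Lemma~\ref{lem:dd_implies_h2} gives $g\equiv 0$ and hence matrices $X,V$ satisfying~\eqref{eqn:dd_necessary_sufficient}.
\end{enumerate}
As a supporting remark, the dual Gramian inequality $P^\star \succeq$ (controllability-type bound) and $W^\star \succeq E^\top (P^\star)^{-1}E$ sandwich the true $H_2$ norm between $0$ and $\Tr(W^\star)$.

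\emph{Reverse direction.} Suppose $F^\star$ satisfies~\eqref{eqn:dd_necessary_sufficient} and $A_{F^\star}$ is Hurwitz.
\begin{enumerate}
\item Lemma~\ref{lem:dd_implies_h2} gives $\norm{g}_{H_2}=0$.
\item Internal stability lets me build a feasible point of~\eqref{eqn:h2_LMI} whose closed-loop matrix is $A_{F^\star}$. Take $P$ to solve $A_{F^\star}P + PA_{F^\star}^\top = -Q$ for a suitable $Q\succ 0$, set $N=F^\star P$, and choose $W = E^\top P^{-1}E$.
\item By scaling $Q$ (equivalently $P$), the first LMI holds. This requires $Q \succeq PH^\top HP + \epsilon P^2$. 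Taking $P$ small in scale makes the quadratic terms dominate negligibly compared to the linear term $Q$.
\item The SDP objective is nonnegative, since $W\succeq 0$ from the last LMI. The optimal $H_2$ value is therefore bounded below by $0$ and attained at $0$, given the correspondence established in the forward direction.
\end{enumerate}

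\emph{Main obstacle.} The delicate point is relating $\Tr(W)$, the SDP objective, to the true closed-loop $H_2$ norm. I need to check that zero $H_2$ norm of $F^\star$ can actually be certified with $\Tr(W)=0$, which requires $E^\top P^{-1}E = 0$, impossible unless $E=0$ when $P\succ 0$. For this reason I would phrase the claim through the quantity the paper itself attaches to the SDP solution, namely $\Tr(E^\top W_oE)$ evaluated at $F^\star$, rather than through $\Tr(W^\star)$. I would then argue infimum/attainment in that sense.

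If a sharper statement is needed, I would show by construction that $\Tr(W)$ can be driven arbitrarily close to $\Tr(E^\top W_oE)=0$ along feasible points. The idea is to take $P$ large along $\im V$, which is $A_{F^\star}$-invariant and contains $\im E$, and small elsewhere. This matches the paper's later remark that the trace representation biases solutions away from exact DD.
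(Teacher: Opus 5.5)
Your forward direction is correct and more complete than the paper's. The paper only cites Lemma~\ref{lem:dd_implies_h2} and never justifies that $A+BF^\star$ is Hurwitz; your Schur-complement/Lyapunov step, with the $\epsilon$-block supplying strictness, fills that in. On the converse your route genuinely diverges from the paper's. The paper argues that a DD, Hurwitz $F$ together with $W_o$ is feasible for~\eqref{eqn:h2_LMI} at zero objective, hence optimal. Your ``main obstacle'' correctly shows this cannot hold literally. The constraint $W\succeq E^\top P^{-1}E$ with $P\succ 0$ forces $\Tr(W)>0$ whenever $E\neq 0$. Moreover, under DD, $E^\top W_oE=0$ gives $W_oE=0$, so $W_o$ is singular and can serve neither as $P$ nor as $P^{-1}$. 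Read as the paper does right after~\eqref{eqn:h2_LMI}, the statement concerns $\Tr(E^\top W_oE)$ evaluated at $F^\star=N^\star(P^\star)^{-1}$. Under that reading both directions reduce to Lemma~\ref{lem:dd_implies_h2} plus the Hurwitz property forced by the first LMI, and your converse is already complete at its step 1. Steps 2--4 only produce some feasible point with closed-loop matrix $A_{F^\star}$. Step 4's ``attained at $0$'' conflates $\Tr(W)$ with $\Tr(E^\top W_oE)$, and nothing there (or in the paper) shows that a DDPF controller minimizes~\eqref{eqn:h2_LMI}.

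The optional sharpening, however, would fail. Multiplying the Schur complement of the first LMI by $P^{-1}$ on both sides gives $A_F^\top P^{-1}+P^{-1}A_F+H^\top H+\epsilon I\preceq 0$. Hence $P^{-1}\succeq W_o+\epsilon Y$ with $Y=\int_0^\infty e^{A_F^\top t}e^{A_Ft}\,dt\succ 0$, so every feasible point with closed-loop matrix $A_F$ satisfies
\[
\Tr(W)\;\ge\;\Tr(E^\top W_oE)+\epsilon\int_0^\infty\norm{e^{A_Ft}E}_F^2\,dt .
\]
For fixed $F^\star$ and $E\neq 0$ the right-hand side is a fixed positive constant, so $\Tr(W)$ cannot be pushed to $0$. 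Concretely, scaling $P$ by $s$ along $\im V$ makes $\epsilon P^2$ grow like $s^2$ on that block, while $A_FP+PA_F^\top$ grows only like $s$. The obstruction is the very $\epsilon$-block you used for strictness in the forward direction. Your construction works only for $\epsilon=0$, where Hurwitz-ness is no longer certified.

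The same computation gives the correct form of your garbled supporting remark: $(P^\star)^{-1}\succeq W_o$ (an observability-type bound on the inverse, not a controllability-type bound on $P^\star$), whence $0\le\Tr(E^\top W_oE)\le\Tr(W^\star)$. It also shows that~\eqref{eqn:h2_LMI} is exactly the $H_2$ problem for the augmented output $[H;\sqrt{\epsilon}\,I]$. Its minimizer therefore need not be DD even when DDPF controllers exist, consistent with Example~\ref{ex:computational_gap}.
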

\begin{proof}
The forward implication follows from Lemma~\ref{lem:dd_implies_h2}. \reviewiii{If a controller $F$ satisfies (6), it is DD. If $A_F$ is also internally stable, then $F$ solves SDP (8) and the closed loop system's} $H_2$ norm is $\text{Tr}(E^\top W_o E)$ , where $W_o$  is the observability Gramian. Since $F, W_o$ is feasible for~\eqref{eqn:h2_LMI} and achieves zero objective (lowest possible for $H_2$ objective), they are optimal for~\eqref{eqn:h2_LMI}. 
\end{proof}



Proposition~\ref{prop:h2_dd_condition} suggests that when DDPF controllers exist, the solution set to SDP~\eqref{eqn:h2_LMI} and the subspace condition~\eqref{eqn:dd_necessary_sufficient} with $A_F$ Hurwitz constraints should be equivalent. However, $H_2$ norm does not distinguish between impulse responses $g(t)  = 0$ everywhere and $g(t) = 0$ \emph{almost everywhere}. Consequently,  impulse responses that are sharply concentrated near $t=0$ can achieve arbitrarily small $H_2$ norms. In practice, optimization solvers under numerical inaccuracy cannot distinguish between $\epsilon$ $H_2$ norms, creating bias away from true disturbance decoupling controllers characterized by~\eqref{eqn:dd_necessary_sufficient} and towards controllers whose impulse responses  with $\epsilon$ $H_2$ norms, is a commonly observed solution to SDP~\eqref{eqn:h2_LMI}. On the other hand, \eqref{eqn:dd_necessary_sufficient} is a subspace condition and does not have this numerical issue. This issue creates a \emph{computation gap} between DDPF solutions and SDP--based $H_2$ optimal control that also persists for other forms of robust optimal controllers that directly optimize over norms (e.g. $H_\infty$). This gap creates observable performance changes under larger disturbance magnitudes. 
\begin{example}[Computational Gap between DD and SDP]\label{ex:computational_gap}
    Consider the LTI system in Example~\ref{ex:1} with the third column of $A$ replced by $\begin{bmatrix}
        0&1&-1
    \end{bmatrix}^\top$. 
    For this system, solving~\eqref{eqn:h2_LMI} gives a controller $F_{H_2}=[-4.1e5 \, -1.0e2 \, -8.4e-1]^\top$ that achieves an optimal $H_2$ norm of $8.1e-14$. However, no matrices $X,V$ exist such that $VX-BF_{H_2}V=AV$. Meanwhile, a DD controller $F_{dd} = \begin{bmatrix}
        0&0&-1
    \end{bmatrix}^\top$ achieves zero $H_2$ norm. 
    In Figure~\ref{fig:dd_no_dd_compare}, the cumulative error $e_{cum}(T) =\int_{t=0}^T\norm{z(t)-z_{dd}(t)}_2dt$ at $T=10$
    between the two controllers show clear numerical error issues in $F_{H_2}$. 
    \begin{figure}[h!]
    \vspace{-8pt}
    \centering
    \includegraphics[width=1\columnwidth]{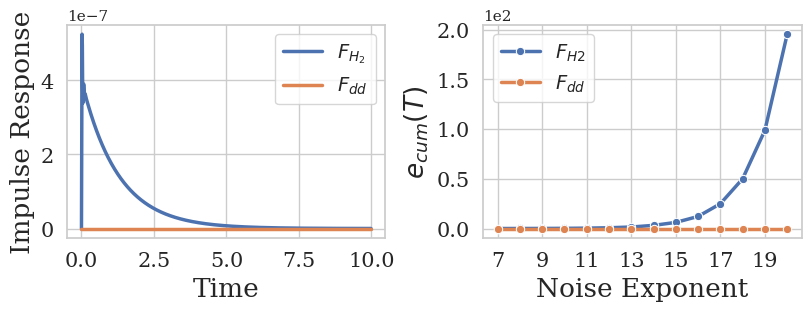}
    \caption{The impulse response and cumulative output error $e_{cum}(10)$. Noise magnitude $d(t)\sim\mc{N}(0,2^l),7\leq \ell \leq 20$.}
    \label{fig:dd_no_dd_compare}
    \end{figure}
\end{example}

\section{Optimization-based Synthesis of DDPF}
As shown in Example~\ref{ex:computational_gap}, $H_2$ norm minimization via SDP~\eqref{eqn:h2_LMI} induces computational errors. Even when DDPF controllers exist, SDP~\eqref{eqn:h2_LMI} may not recover them (in fact, we observed that~\eqref{eqn:h2_LMI}  rarely recovers DDPF). 
Motivated by this, we develop an optimal control formulation that directly enforces~\eqref{eqn:dd_necessary_sufficient}. This also enables DD to be imposed in conjunction with additional metrics (e.g. convergence rate, actuation effort) over internally stabilizing controllers. 


We consider an optimal control problem in which internal stability and DD as the subspace condition~\eqref{eqn:dd_necessary_sufficient} are enforced as constraints. In formulation given below, we assume that the matrix $V$ is pre-computed via existing algorithms~\cite{Sarsilmaz2024}. 
\begin{subequations}\label{eqn:unified_framework}
    \begin{align}
    \underset{\alpha, F, P,X}{\min} & J(\alpha, F, P, X)\\
    \text{s.t. } & A_F^\top P + PA_F + \Pi(P, \alpha)\preceq 0,\label{subeqn:unified_lyap} \\
    &  V X - BFV = AV,\label{subeqn:unified_dd}\\ 
    &   P \succ 0^{n\times n}, X \in \reals^{k\times k},\  F \in \reals^{m\times n},\  \alpha \in \reals.
    \end{align}
\end{subequations}

\textbf{Modeling Capability}. The optimization program in~\eqref{eqn:unified_framework} provides a unified modeling framework that combines the DD subspace condition~\eqref{subeqn:unified_dd} with standard closed-loop performance objectives~\eqref{subeqn:unified_lyap}, where the term $\Pi(P,\alpha)$ encodes the desired \reviewi{stability} certificate. \reviewi{This enables the selection of a controller that optimizes the specified performance objective within the family of DD controllers}. Several important control objectives are captured by this formulation, 
\begin{enumerate}
    \item Internal stability: $\Pi(P, \alpha) = H^\top H + \epsilon I$. 
    \item Convergence: $\Pi(P, \alpha) = 2\alpha P$ (convergence rate is $2\alpha$). 
    \item Without internal stability: $\Pi(P, \alpha) = -(A_F^\top P + PA_F)$ (removing constraint~\eqref{subeqn:unified_lyap}).
\end{enumerate}
\reviewi{For the choice of $\Pi(P,\alpha)$ without internal stability, program~\eqref{eqn:unified_framework} optimizes over the set of all DD controllers, including those that are unstable and marginally stable.} 


\textbf{Problem Nonconvexity.}
The program~\eqref{eqn:unified_framework} is a BMI-constrained optimization problem for performance objectives (1) and (2) listed above. Without the DD condition~\eqref{subeqn:unified_dd}, it is identical to the controller design problem for state-output feedback~\cite{feron1992numerical,boyd1994linear} and as such, can be convexified via Schur complement. However, the convexification of~\eqref{subeqn:unified_lyap} transforms~\eqref{subeqn:unified_dd} into a \emph{BMI}, which cannot be convexified further. To illustrate this, let $\Pi(P, \alpha) = 0$, $Q\coloneqq P^{-1}$, and $N=QF$, then~\eqref{subeqn:unified_lyap} is given by
\begin{equation}
    \begin{aligned}
        (QA+BN)^\top+(AQ+BN)&\preceq 0\\
        V^\star X -AV^\star - BNQ^{-1}V^\star &=0
    \end{aligned}
\end{equation}
which produces a bilinear term $NQ^{-1}$ that cannot be further convexified.

\subsection{Local optimality via constraint qualification.}
Due to its bilinear coupling between decision variables, ~\eqref{eqn:unified_framework} cannot be solved to global optimality in general. Instead, we focus on locally optimal solutions by computing~\eqref{eqn:unified_framework}'s KKT points. To ensure that KKT conditions  correspond to locally optimal solutions and therefore~\eqref{eqn:unified_framework} are suitable for primal--dual optimization methods, we first verify that the Robinson's constraint qualification (RCQ) holds~\cite[Chp.3]{bonnans2013perturbation}.

\begin{theorem}[Robinson's CQ Satisfaction]\label{thm:rcq}
All locally optimal solutions of~\eqref{eqn:unified_framework} satisfy its KKT conditions if both of the following conditions hold:
\begin{enumerate}
    \item  there exists $P \succ\! 0$ such that $A_F^\top P + PA_F + \Pi(P, \alpha)\prec\!0$;
    \item  $V $ has full column rank, and $\im{V} + \im{B} = \reals^n$.
\end{enumerate}
\end{theorem}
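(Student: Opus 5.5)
We plan to verify Robinson's constraint qualification at an arbitrary feasible point $(\alpha,F,P,X)$ of~\eqref{eqn:unified_framework}, and then invoke the standard result~\cite[Chp.3]{bonnans2013perturbation}: under RCQ, every local minimizer of a smooth cone-constrained program admits Lagrange multipliers, i.e., satisfies the KKT conditions. Write the constraints as $G(\alpha,F,P,X)\in K$, where
\begin{equation*}
G=\big(A_F^\top P+PA_F+\Pi(P,\alpha),\; VX-BFV-AV,\; P\big),\qquad K=\mathbb{S}^n_-\times\{0\}\times \mathbb{S}^n_{++}.
\end{equation*}
Since $P\succ0$ is an open constraint, it is inactive and can be dropped locally. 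RCQ then reads
\begin{equation*}
0\in \mathrm{int}\big\{G(\bar x)+DG(\bar x)\,\mathbb{R}^{\dim}-K\big\}.
\end{equation*}
For a product of a cone with interior and an equality constraint, this is equivalent to a Mangasarian--Fromovitz-type condition, which we check in two parts.

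\textbf{Surjectivity of the equality part.} The derivative of $h=VX-BFV-AV$ in the directions $(\Delta X,\Delta F)$ is $V\Delta X-B\Delta F V\in\reals^{n\times k}$. Given any target $Y\in\reals^{n\times k}$, condition 2 ($\im V+\im B=\reals^n$) lets us decompose $Y=VZ_1+BZ_2$. Because $V$ has full column rank, it has a left inverse $V^\dagger$ with $V^\dagger V=I_k$. Setting $\Delta X=Z_1$ and $\Delta F=-Z_2V^\dagger$ gives $-B\Delta F V=BZ_2$, so the map is onto. This is where condition 2 is used.

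\textbf{A direction in the kernel that strictly enters the cone.} We need a direction $d$ with $Dh\,d=0$ such that $G_1(\bar x)+DG_1(\bar x)d\prec0$, where $G_1$ is the Lyapunov block. Condition 1 supplies some $\tilde P\succ0$ with $A_{\bar F}^\top\tilde P+\tilde PA_{\bar F}+\Pi(\tilde P,\bar\alpha)\prec0$. We take $\Delta F=0$, $\Delta X=0$, $\Delta\alpha=0$, and $\Delta P=\tilde P-\bar P$. This leaves $h$ unchanged, and because $G_1$ is affine in $P$ for fixed $(F,\alpha)$ in cases (1)--(2), we get $G_1(\bar x)+DG_1(\bar x)d=G_1(\bar F,\tilde P,\bar\alpha)\prec0$. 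In case (3) the constraint is vacuous. If $\Pi$ is nonlinear in $P$, we would instead use the convexity of the map in $P$ to obtain the same strict inequality.

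\textbf{Combining and the main obstacle.} Surjectivity of $Dh$ together with the existence of such an interior direction is the standard equivalent form of RCQ for $K_1\times\{0\}$ with $\mathrm{int}K_1\neq\emptyset$~\cite[Prop. 2.97 / Lem. 2.99]{bonnans2013perturbation}. The KKT conclusion then follows from the first-order necessary conditions theorem in~\cite[Chp.3]{bonnans2013perturbation}.

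The step we expect to be the main obstacle is interpreting condition 1 so that the strict Lyapunov point is attached to the same $F$ (and $\alpha$) as the local minimizer. If it is only assumed for some $F$, we would read the hypothesis as holding at the feasible point under consideration, since $F$ must stay fixed in order to keep $h$ unchanged.
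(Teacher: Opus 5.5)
Your proposal is correct and follows essentially the paper's route. Both use the Bonnans--Shapiro characterization of RCQ (Cor.~2.101): surjectivity of the equality-constraint derivative, obtained from a left inverse of $V$ together with $\im{V}+\im{B}=\reals^n$, plus an interior direction in its kernel supplied by condition~1.

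The only difference is in that direction. The paper takes $z_0=0$, which implicitly requires the minimizer's own $P$ to satisfy the strict Lyapunov inequality. Your choice $\Delta P=\tilde P-\bar P$ uses that the constraint is affine in $P$ for fixed $(F,\alpha)$, so it only needs some strict certificate $\tilde P$ at the same $(F,\alpha)$. This matches the ``there exists $P$'' wording of condition~1 more faithfully.
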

\begin{proof}
We show that the KKT conditions are necessary for local optimality by showing that~\eqref{eqn:unified_framework} satisfies Robinson's Constraint Qualification (RCQ)~\cite[Eqn.3.13]{bonnans2013perturbation}. \reviewiii{Let $\mathcal{X}\coloneqq\mathbb{R}\times \mathbb{R}^{m\times n}\times \mathcal{S}^n\times \mathbb{R}^{k\times k}$, $z\coloneqq(\alpha,F,P,X)\in \mathcal{X}$, $\mathcal{Y}_1\coloneqq \mathcal{S}^n\times\mathcal{S}^n$, and $\mathcal{Y}_2\coloneqq \mathbb{R}^{n\times k}$, }
the feedback optimal control problem~\eqref{eqn:unified_framework} can be equivalently expressed as
\begin{equation*}
\textstyle\min_{z\in\mc{X}}\  J(z)  \text{    s.t. } \mathcal E(z) = 0,  \textstyle M(z) \in \mathcal{S}^{n}_+ \times \mathcal{S}^{n}_+,
\end{equation*}
where $M:\mathcal{X}\rightarrow\mathcal{Y}_1$, $\mathcal{E}:\mathcal{X}\rightarrow\mathcal{Y}_2$ are defined as $M(z) = \begin{bmatrix}
    P \\ -(A_F^\top P + PA_F + \Pi(P, \alpha))
\end{bmatrix} $ and $\mathcal E(z) = VX -BFV-AV$. \reviewi{Since 1) $\mathcal{X}$ and $\mathcal{Y}_1\times\mathcal{Y}_2$ are Banach spaces, 2) $(M,\mathcal{E}):\mathcal{X}\rightarrow\mathcal{Y}_1\times\mathcal{Y}_2$ is continuously differentiable, 3) $\mathcal{S}_+^n\times \mathcal{S}_+^n\times\{0\}\subset \mathcal{Y}_1\times\mathcal{Y}_2$ is a closed convex set, and 4) $\mathcal{S}_+^n\times \mathcal{S}_+^n$ has a nonempty interior, then an equivalent condition to RCQ~\cite[Cor.2.101] {bonnans2013perturbation} at $z$ is i) $D\mathcal{E}(z)$ is surjective and
\begin{equation*}
    ii)\, \exists \, z_0 \in\text{Ker}DG(z), M(z) + DM(z)z_0 \in \textbf{Int}(\mathcal{S}^{n}_+ \times \mathcal{S}^{n}_+),
\end{equation*}
}
where $D(\cdot)$ is the gradient with respect to $z$. 
We first note that since $V\in\reals^{n\times k}$ has full column rank, there exists $L \in \reals^{k\times n}$ such that $LV = I_{k\times k}$. Let any $N\in\reals^{m\times k}$, then $F = NL$ ensures $FV=N$. Finally, since $X, N$ are unconstrained and $\im{V}+\im{B}=\mathbb{R}^n$, then linear map $\mathcal{E}(z)$ is surjective, which implies surjectivity of $D\mc{E}(z)$, satisfying condition i). Next, we select $z_0=0$, then condition ii) is equivalent to $M(z) \in (\mathcal{S}^{n}_{++} \times \mathcal{S}^{n}_{++}),$
implying a positive definite $P$ that strictly satisfies~\eqref{subeqn:unified_lyap}, which is also stated as the first condition in \reviewiii{Theorem~\eqref{thm:rcq}}.
\end{proof}
\begin{corollary}
    Condition (2) from Theorem~\ref{thm:rcq} \reviewiii{can always be satisfied by replacing}~\eqref{subeqn:unified_dd}  with its \reviewiii{rre form}, given by 
    \begin{eqnarray}\label{eqn:lme_dd_rre}
     \underbrace{\rre\Bigg(\begin{bmatrix}
    I_{k} \otimes V &  V^\top  \otimes (-B) 
    \end{bmatrix}\Bigg.}_\text{$C$}  \underbrace{\begin{bmatrix}
    		\vvec X \\ \vvec F
    \end{bmatrix}}_\text{$y$} \Bigg|\Bigg. \underbrace{\Bigg.\vvec(AV)\Bigg)}_\text{${b}$}.
    \end{eqnarray}
\end{corollary}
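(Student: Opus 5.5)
I would prove the corollary by showing that the role condition (2) plays in the proof of Theorem~\ref{thm:rcq} is exactly surjectivity of the derivative of the equality constraint, i.e. condition i) of the RCQ characterization. Replacing~\eqref{subeqn:unified_dd} by its reduced row echelon form gives an equivalent linear system with a surjective coefficient map. The first step is to vectorize~\eqref{subeqn:unified_dd} using $\vvec(VX) = (I_k\otimes V)\vvec X$ and $\vvec(BFV) = (V^\top\otimes B)\vvec F$. This writes the constraint as $\begin{bmatrix} I_k\otimes V & V^\top\otimes(-B)\end{bmatrix} y = \vvec(AV)$, with $y = [\vvec X;\vvec F]$. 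So $\mathcal{E}(z)$ becomes an affine map $y\mapsto \tilde C y - b$ with constant Jacobian $D\mathcal{E}(z) = \tilde C$. Surjectivity of $D\mathcal{E}$ is then exactly full row rank of the coefficient matrix.

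Next, I would row-reduce the augmented matrix $[\tilde C \mid b]$ to reduced row echelon form. The zero rows of the left block then fall into two cases.
\begin{enumerate}
\item If some zero row has a nonzero right-hand entry, the constraint is infeasible and no DD controller exists for this $V$. The statement about local optima is then vacuous.
\item Otherwise the zero rows read $0=0$ and are deleted. The remaining rows give a matrix $C$ with full row rank (one pivot per row) and a vector $b$ with the same solution set $\{y : Cy=b\}$.
\end{enumerate}
Row operations are invertible and preserve the solution set, so the feasible region of~\eqref{eqn:unified_framework}, and hence its local optima, are unchanged.

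Then I would rerun the RCQ argument of Theorem~\ref{thm:rcq} with $\mathcal{E}(z) := Cy - b$, mapping into $\reals^{r}$ with $r = \operatorname{rank}\tilde C$. Its derivative $C$ is onto $\reals^r$, so condition i) holds without any assumption on $V$ or $B$. Condition ii) is unaffected, since it only involves $M$ and the choice $z_0 = 0$, and it is covered by condition (1) of Theorem~\ref{thm:rcq}. Invoking~\cite[Cor.2.101]{bonnans2013perturbation} again yields RCQ, and hence KKT necessity.

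The main obstacle is interpretive: what precisely is meant by condition (2) being ``satisfied''. Condition (2) is stated geometrically, and the rre reformulation does not change $V$ or $B$. So I would phrase the result as saying that the surjectivity that condition (2) was used to guarantee now holds automatically. I would also take care over the degenerate case in which the rank drops. In that case the Lagrange multipliers for the original, redundant equalities are non-unique, but the reduced system restores the CQ.
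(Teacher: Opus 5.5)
Your proposal is correct and follows the paper's own argument. You vectorize \eqref{subeqn:unified_dd} as $Cy=b$, pass to the reduced row echelon form so that the equality-constraint map becomes surjective, and conclude that RCQ condition i) from the proof of Theorem~\ref{thm:rcq} holds while condition ii) is unaffected; your explicit deletion of the zero rows (and separate treatment of an inconsistent system) makes precise what the paper's one-line proof leaves implicit, since an rre matrix that keeps its zero rows would not define a surjective map.
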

\begin{proof}
    Constraint~\eqref{subeqn:unified_dd} can be written as the vectorized equation $Cy=b$. Its rre form~\eqref{eqn:lme_dd_rre} defines an equivalent constraint whose associated linear map is surjective. Hence, RCQ equivalent condition i) stated in the proof of Theorem~\ref{thm:rcq} is satisfied.
\end{proof}

\subsection{Solving for DDPF}
We showed that~\eqref{eqn:unified_framework} has well-conditioned KKT points. However, computing feasible KKT points remains nontrivial, given~\eqref{eqn:unified_framework} is nonconvex and incompatible with existing BMI solvers such as PENBMI~\cite{henrion2005solving}. We present an inner convex approximation scheme that computes the KKT points  of~\eqref{eqn:unified_framework} while preserving DD and stability at each iteration.  

\textbf{Inner convex approximation method}. We consider the following inner approximation technique from~\cite{dinh2011combining}. 
\begin{lemma}[\reviewi{\cite{dinh2011combining}}]\label{lem:inner_approx}
    For matrix $Z \in \reals^{m\times n}, Y \in \reals^{n\times m}$, the matrix $Y^\top Z + Z^\top Y$ is inner approximated at $(Z^k,Y^k)$ by 
    \begin{equation}\label{eqn:v_1}
        Y^\top Z + Z^\top Y \!\preceq\! G_1(Z, Y) -\reviewi{G_2(Z^k,Y^k)} - F(\Delta Z, \Delta Y, Z^k, Y^k),
    \end{equation}
    where $G_1(Z,Y) = (Z+Y)^\top(Y+Z)$ , $G_2(Z,Y) = Z^\top Z + Y^\top Y$ and 
    \[F = \Delta Z^\top Z^k + (Z^k)^\top \Delta Z + \Delta Y^\top Y^k + (Y^k)^\top \Delta Y.\]
\end{lemma}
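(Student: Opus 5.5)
The plan is to prove the inequality by completing a square, using the elementary fact that $W^\top W \succeq 0$ for any real matrix $W$. This fact is what makes the DC decomposition work. We have the exact identity
\[
Y^\top Z + Z^\top Y = (Z+Y)^\top(Z+Y) - Z^\top Z - Y^\top Y = G_1(Z,Y) - G_2(Z,Y),
\]
so the bilinear term is a difference of two matrix-convex quadratics. The first term $G_1$ is kept intact. The subtracted term $-G_2$ is matrix-concave, so we replace it by its linearization at $(Z^k,Y^k)$, which bounds it from above.

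The key step is to show that $G_2$ dominates its first-order expansion at $(Z^k,Y^k)$, in the Loewner order. Write $Z = Z^k + \Delta Z$ and $Y = Y^k + \Delta Y$ and expand:
\[
Z^\top Z = (Z^k)^\top Z^k + \Delta Z^\top Z^k + (Z^k)^\top \Delta Z + \Delta Z^\top \Delta Z .
\]
The expansion of $Y^\top Y$ is analogous. Summing the two gives
\[
G_2(Z,Y) = G_2(Z^k,Y^k) + F(\Delta Z,\Delta Y,Z^k,Y^k) + \Delta Z^\top\Delta Z + \Delta Y^\top \Delta Y .
\]
Since $\Delta Z^\top\Delta Z + \Delta Y^\top\Delta Y \succeq 0$, this yields $G_2(Z,Y) \succeq G_2(Z^k,Y^k) + F$.

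Combining this with the identity gives
\[
Y^\top Z + Z^\top Y = G_1 - G_2(Z,Y) \preceq G_1(Z,Y) - G_2(Z^k,Y^k) - F,
\]
which is exactly~\eqref{eqn:v_1}. Equality holds at $(Z,Y)=(Z^k,Y^k)$, because there $\Delta Z = \Delta Y = 0$ and the discarded positive semidefinite remainder vanishes. This tightness is what makes the bound an inner approximation that is exact at the linearization point.

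Two minor points need checking. First, the dimensions: with $Z\in\reals^{m\times n}$ and $Y\in\reals^{n\times m}$ as literally stated, the sum $Z+Y$ is undefined. I would therefore read both matrices as having the same dimensions, say $Z,Y\in\reals^{m\times n}$, so that every term is $n\times n$ and symmetric. Second, it is worth noting that the right-hand side of~\eqref{eqn:v_1} is convex in $(Z,Y)$. It consists of one convex quadratic $G_1$ plus terms that are affine in $(\Delta Z,\Delta Y)$. By a Schur complement it can be written as an LMI. This convexity is not needed for the inequality itself, only for its later algorithmic use.
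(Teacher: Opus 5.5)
Your proof is correct and follows the route the paper indicates. The paper cites Dinh et al.\ and states only the identity $Y^\top Z + Z^\top Y = G_1(Z,Y) - G_2(Z,Y)$. It implicitly replaces the concave part $-G_2$ by its linearization at $(Z^k,Y^k)$, and you make explicit the discarded remainder $\Delta Z^\top \Delta Z + \Delta Y^\top \Delta Y \succeq 0$ that justifies the inequality. Reading $Z$ and $Y$ as having the same dimensions is the right fix for the statement's typo, and it matches the paper's use with $Z = A+BF+\alpha I$ and $Y = P$, both $n\times n$.
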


Since $Y^\top Z + Z^\top Y = G_1(Z, Y) - G_2(Z,Y)$, ~\eqref{eqn:v_1} is the best convex approximation achievable, which can be transformed into LMIs via Schur complements.
Applying Lemma~\ref{lem:inner_approx} to the BMI constraint~\eqref{subeqn:unified_lyap}, we formulate an inner approximation to~\eqref{eqn:unified_framework} around a feasible point \reviewi{$z^k=(\alpha^k, F^k, P^k, X^k)$} \reviewi{ by linearizing the constraints and augmenting the objective with a penalty around $z^k$}. 
\begin{equation}\label{eqn:unified_framework_linearized}
    \begin{cases}
    \underset{\alpha, P, F,X}{\min} & J(\alpha, F, P, X)+ \frac{\gamma}{2} \sum_{M \in \{\alpha,P,F\}} \norm{M - M^k}_2^2\\
    \text{s.t. } 
    & \begin{bmatrix}
        \reviewi{G_2(Z^k,P^k)+DG_2^k} & (Z+P)^\top\\
        (Z+P) & I
    \end{bmatrix}\succeq 0\\
     &  V X - BFV = AV,\\
     &0_{n\times n}\prec P, \ F \in \reals^{m\times n}, \ \alpha \in \reals, \ X \in \reals^{k\times k}.
    \end{cases}
\end{equation}
When $\Pi(P,\alpha)=2\alpha P$, $Z$ and $DG^k_2$ \reviewi{(the gradient of $G_2$ at $Z^k, Y^k$)}  are given by
\begin{align*}
    DG_2^k&=(B\Delta F + \Delta \alpha I)^\top Z^k+(Z^k)^\top(B\Delta F + \Delta \alpha I)+\\
    &\quad\ \Delta P^\top P^k+(P^k)^\top \Delta P, \quad Z=A+BF+\alpha I,
\end{align*}
and when $\Pi(P,\alpha)=H^\top H + \epsilon I$, $Z$ and $DG^k_2$ are given by
\begin{align*}
    DG_2^k&=(B\Delta F)^\top A_F^k+(A_F^k)^\top(B\Delta F) -H^\top H-\epsilon I + \\
    &\quad\ \Delta P^\top P^k+(P^k)^\top \Delta P, \quad Z=A+BF, 
\end{align*}
where $Z^k=A+BF^k+\alpha^k I$, and $A_F^k = A+BF^k$. Without loss of generality, we assume $\Pi(P,\alpha)=2\alpha P$ for the following discussions.


\begin{algorithm}[ht!]
\caption{Successive linearization of BMI}
\begin{algorithmic}[1]
\Require \(f\), \(\alpha_0\),  \(P_0\), \(F_0\) , \(\epsilon\).
\Ensure \(\alpha^\star, F^\star, P^\star, X^\star\).
\For{\(k=0,1, \ldots\)}
    \State{Solve~\eqref{eqn:unified_framework_linearized} for $(\alpha^{k+1}, P^{k+1},  F^{k+1}, X^{k+1})$} 
    \If{\( \sum_{M \in \{\alpha, P, F\}} \norm{M^{k+1} - M^k}^2_2 > \epsilon\)}
    \State{\((\alpha^{k}, P^{k},  F^{k}, X^{k})=(\alpha^{k+1}, P^{k+1},  F^{k+1}, X^{k+1})\)}
    \textbf{\indent else break}
    \EndIf
\EndFor 
\end{algorithmic}
\label{alg:successive_linear}
\end{algorithm}


\textbf{DD and Stability Guarantee}. We iteratively solve ~\eqref{eqn:unified_framework_linearized} in Algorithm~\ref{alg:successive_linear}. In Algorithm~\ref{alg:successive_linear}, ~\eqref{eqn:unified_framework_linearized} enforces DD~\eqref{subeqn:unified_dd} and solves an inner approximation of~\eqref{subeqn:unified_lyap}. When the algorithm is initialized at a feasible solution to~\eqref{eqn:unified_framework}, each iterate \reviewi{$z^k=(\alpha^k,P^k,F^k, X^k)$} will remain feasible for all $k >0$~\cite{dinh2011combining}, which ensures system stability and DD. 

\reviewii{We highlight that the contribution of our proposed method is not  algorithmic novelty. Instead, the contribution is to connect performance optimization over DDPF controllers to a bilinear optimization problem, which allows for optimal control synthesis approaches to DD and DDPF such as Algorithm~\ref{alg:successive_linear}.} 

\begin{lemma}[Convergence~\cite{dinh2011combining}]
     Algorithm~\ref{alg:successive_linear} converges to a KKT point $\reviewi{z}$ of~\eqref{eqn:unified_framework} if \reviewiii{1)}~\eqref{eqn:unified_framework} has non-empty domain $\mc{P}$, \reviewiii{2)} $J$ is convex in $\reviewi{z}$, \reviewiii{3)} $J(\reviewi{z}) > -\infty$  for all $\reviewi{z} \in \mc{P}$, and \reviewiii{4)} second order-sufficient condition holds for $\reviewi{z^k}$~\cite[Chp.3]{bonnans2013perturbation}. 
\end{lemma}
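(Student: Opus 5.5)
The plan is to reduce Algorithm~\ref{alg:successive_linear} to the inner convex approximation framework of~\cite{dinh2011combining} and then verify, one by one, the hypotheses of that paper's convergence theorem.

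\textbf{Step 1: feasibility and monotone descent.} Write the bilinear constraint~\eqref{subeqn:unified_lyap} (with $\Pi(P,\alpha)=2\alpha P$) as a difference of two convex matrix-valued maps, $G_1(Z,P)-G_2(Z,P)\preceq 0$ with $Z=A+BF+\alpha I$, as in Lemma~\ref{lem:inner_approx}. Replacing $G_2$ by its linearization at $z^k$ gives a convex upper bound, because $G_2$ is matrix-convex. Hence every point feasible for~\eqref{eqn:unified_framework_linearized} is feasible for~\eqref{eqn:unified_framework}. The DD constraint~\eqref{subeqn:unified_dd} is linear and is kept exactly.

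Two consequences follow.
\begin{itemize}
\item Since $z^k$ is itself feasible for the $k$-th subproblem (the linearization is tight at $z^k$), induction from a feasible initialization gives $z^{k+1}\in\mc{P}$ for all $k$.
\item Comparing the subproblem objective at $z^{k+1}$ with its value at $z^k$ gives
\[
J(z^{k+1})+\tfrac{\gamma}{2}\norm{z^{k+1}-z^k}^2\le J(z^k).
\]
\end{itemize}
Assumption 3) bounds $J$ below, so $J(z^k)$ converges and $\sum_k\norm{z^{k+1}-z^k}^2<\infty$. In particular the stopping test of Algorithm~\ref{alg:successive_linear} is eventually met.

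\textbf{Step 2: accumulation points are KKT points.} Take a convergent subsequence $z^{k_j}\to\bar z$, with consecutive differences tending to zero. Each subproblem is convex and, by condition 1) of Theorem~\ref{thm:rcq}, strictly feasible, so Slater's condition holds. Its KKT system therefore has multipliers for the LMI constraint and the linear equality. Because the linearized map and its derivative agree with the original constraint map at the expansion point, passing to the limit turns the subproblem KKT system into the KKT system of~\eqref{eqn:unified_framework} at $\bar z$. The proximal term disappears in the limit because $z^{k+1}-z^k\to 0$.

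\textbf{Main obstacle: bounding the iterates and multipliers.} The step I expect to be hardest is showing that the iterates and the multipliers stay bounded, so that the subsequence and the limiting multipliers exist. The constraint $P\succ 0$ is open, so iterates could in principle approach its boundary.

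My approach is as follows.
\begin{itemize}
\item Use the level-set monotonicity from Step 1 together with coercivity that the proximal structure induces.
\item Use the RCQ established in Theorem~\ref{thm:rcq} at $\bar z$. By~\cite[Chp.3]{bonnans2013perturbation}, RCQ yields a bounded multiplier set, so the multipliers remain bounded near $\bar z$ and the limit can be taken.
\end{itemize}

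\textbf{Step 3: convergence of the whole sequence.} To go from subsequential convergence to convergence of the full sequence, I would invoke assumption 4), the second-order sufficient condition. It makes $\bar z$ a strict local minimizer that is isolated among KKT points. Combined with $\norm{z^{k+1}-z^k}\to 0$ from Step 1, a standard argument on the limit set of the sequence (it is connected and consists of KKT points, hence is the single point $\bar z$) gives $z^k\to\bar z$. This completes the reduction to~\cite{dinh2011combining}.
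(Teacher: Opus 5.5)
Your three steps follow the paper's route: descent (Lemma~4.2 of \cite{dinh2011combining}), stationarity of limit points (its Thm.~4.3), then convergence of the whole sequence. However, the step you flag as the main obstacle is not closed by what you propose.

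The proximal term gives no coercivity, because its centre $z^k$ moves with $k$. The descent inequality only yields $\sum_k\norm{z^{k+1}-z^k}^2<\infty$, and that does not bound the iterates. Nor do hypotheses 1)--4) bound the level sets of $J$ on $\mc{P}$. With $\Pi=2\alpha P$ the constraint is invariant under $P\mapsto tP$ for $t>0$, and $J=-\alpha$ ignores $P$. So the level set of $z^0$ contains $\{(\alpha^0,F^0,tP^0,X^0): t>0\}$, which is unbounded and accumulates at $P=0\notin\mc{P}$.

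Hence neither the existence of an accumulation point $\bar z$ nor $\bar P\succ 0$ follows. RCQ at $\bar z$ cannot be used until both are known, so you must add boundedness of $\{z^k\}$, with limit points in $\mc{P}$, as an assumption. The paper does not derive this either; it simply defers to the cited Thm.~4.3.

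Separately, the hypotheses of Theorem~\ref{thm:rcq} that you invoke are not among 1)--4), and you do not need them for Slater. From $z^k$ with $P^k\succ 0$, lowering $\alpha$ by a small $\delta>0$ makes the linearized constraint $\preceq -2\delta P^k+\delta^2 I\prec 0$. When $\Pi=H^\top H+\epsilon I$, scale $P^k$ by $1+\delta$ instead. Also, your descent inequality controls only $(\alpha,P,F)$: $X$ is not in the proximal term, and it is pinned down by $F$ only when $V$ has full column rank.

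In Step 3 (reading hypothesis 4) as SOSC at $\bar z$, as you do), SOSC alone does not make $\bar z$ an isolated KKT point. In Robinson's example, $\min \tfrac12 x^2$ subject to $x^6\sin(1/x)=0$, SOSC holds at $x=0$, yet the feasible points $1/(n\pi)$ are KKT points converging to $0$. You need a constraint qualification at $\bar z$ as well; in nonlinear programming, Robinson's isolatedness result uses MFCQ together with SOSC for every multiplier.

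Here such a CQ is available once $\bar P\succ 0$. The direction $\Delta\alpha=-1$ (or $\Delta P=\bar P$ when $\Pi=H^\top H+\epsilon I$) makes the LMI strict, and the DD equality can be made surjective through its rre form. Then finish with the Mor\'e--Sorensen lemma: an isolated accumulation point together with $\norm{z^{k+1}-z^k}\to 0$ forces convergence. Use this rather than the connected-limit-set argument, which again needs boundedness.

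Repaired this way, your Step 3 supplies what the paper's last sentence asserts without justification, since $\norm{z^{k+1}-z^k}\to 0$ by itself does not imply convergence.
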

\begin{proof}
    Suppose a sequence of solution $\{\reviewi{z^k}\}_{k\geq0}$ generated by Algorithm~\ref{alg:successive_linear}, then its limit point belongs to the set of stationary points~\cite[Thm.4.3]{dinh2011combining}, and $J(\reviewi{z^{k+1}})-J(\reviewi{z^k})$ is negative and monotone decreasing~\cite[Lemma.4.2]{dinh2011combining}. Since $J(\reviewi{z})$ is bounded, $\lim_{k\rightarrow \infty}\norm{\reviewi{z^{k+1}}-\reviewi{z^k}}=0$ implies that $\{\reviewi{z^k}\}_{k\geq0}$ converges to a KKT point of~\eqref{eqn:unified_framework}.
\end{proof}

\textbf{Initial Condition}. Algorithm~\ref{alg:successive_linear} provides DD and stability guarantees when we initialize with a feasible solution to~\eqref{eqn:unified_framework}. This is equivalent to finding a DDPF controller, which can be solved via  the spectral abscissa method~\cite[Sec. 3.5.1]{Sarsilmaz2024}.

\section{Power Network Optimal Control}
We apply DD with internal stability to a four bus power network~\cite{Yi2019}. 
We empirically verify that our DD optimal control~\eqref{eqn:unified_framework} is DDPF over different system parameters. We also compare its output against SDP-$H_2$ control over convergence, actuation effort, $H_2$ norm and DD error~\eqref{eqn:dd_condition}.

\textbf{Power Network Modeling}.
\begin{figure}[]
\centering
\includegraphics[width=0.5\linewidth]{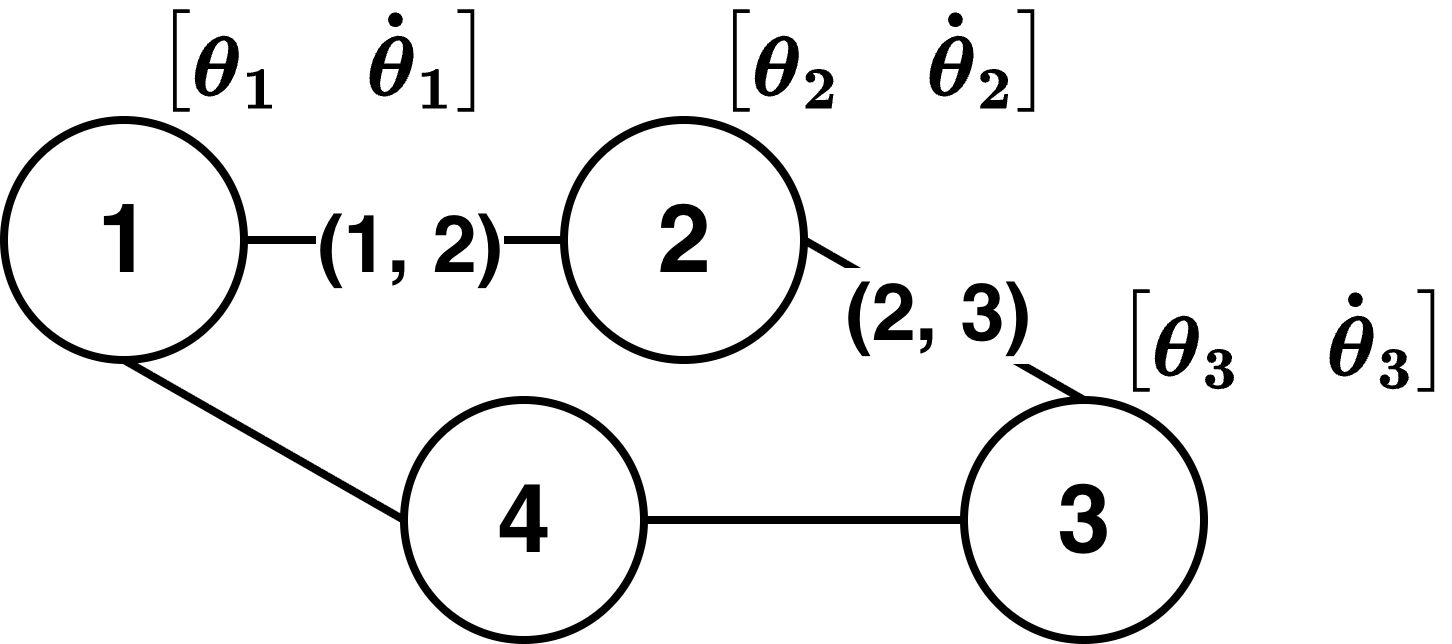}
\caption{Four-bus power network example. The (undirected) edges represents transmission lines between buses.}
\label{fig:powernetwork}
\end{figure}
We use a four bus power network from~\cite{Yi2019}, shown in Figure~\ref{fig:powernetwork} with bus 4 grounded as the infinite bus. Each node $i \in \{1,2,3\}$ represents a bus with two states: a voltage phase angle $\theta_i \in [0, 2\pi)$ and frequency $\dot{\theta}_i \in \reals_+$. Each bus also has a mechanical input torque $T_{M,i} \in \reals$\reviewiii{, which we use as the control input $u_i\in\mathbb{R}$}. The linearized generator dynamics around a stable operating point is given by~\cite{Yi2019,Tegling2015}
\begin{equation}\label{eqn:swing_equation}
   \textstyle M_i\ddot{\theta}_i+\beta_i\dot{\theta}_i = -\sum_{(i,j)\in\mathcal{E}}b_{ij}(\theta_i-\theta_j)+T_{m,i}. 
\end{equation}
For each bus $i$, the rotational inertia $M_i \in \reals_{+}$ has nominal value $10$ and the damping coefficient $\beta_i\in \reals_{+}$ has nominal value $\beta_i=10$. The transmission lines $(1,2),$ $ (2,3), (3,4), (4,1)$ respectively have susceptance $b_{ij}=\{0.386, 0.294, 0.596, 0.474\}$~\cite{Yi2019}.
Let $M=\diag{\{M_i\}}$, $D=\diag{\{\beta_i\}}$, and $L\in\mathbb{S}^3$ be the line susceptance Laplacian matrix. This power network has states $x=\begin{bmatrix}
        \theta^\top
        \dot{\theta}^\top 
    \end{bmatrix}^\top \in\reals^{6}$\reviewiii{, control $u \in \reals^{3}$,} and LTI matrices~\eqref{eqn:lti} given by
\begin{align*}
    A=\begin{bmatrix}
        0 & I\\
        -M^{-1}L & -M^{-1}D
    \end{bmatrix} \in\reals^{6\times6},\, 
    B=\begin{bmatrix}
        0\\
        M^{-1}
    \end{bmatrix} \in\reals^{6\times3}.
\end{align*}

\textbf{Disturbance Modeling}. We consider situations in which generator $3$ sustains stochastic mechanical torque deviations that alter the phases  of other generators and result in power loss in the entire network~\cite{Yi2019}. Stochastic mechanical torque deviations are additive noises to $\ddot{\theta}$ over time, such that  $E$ in~\eqref{eqn:lti} is $E=e_6$ and the external disturbance is modeled as 
\begin{equation}\label{eqn:noise_variance}
    d(t) \sim \mc{N}(0, \sigma), d(t) \in \reals, \quad \forall t \in \reals_+. 
\end{equation}
To ensure the nominal operation of buses 1 and 2 relative to the stable operating point, we use an output matrix given by $H=\begin{bmatrix}
    I_2 & 0_{4}
\end{bmatrix} \in \reals^{2\times 6}$ and outputs $ z(t) = Hx(t) \in \reals^2.$




\subsection{Case Study 1: DDPF verification}
We first verify that the optimal control problem~\eqref{eqn:unified_framework} finds a DD~\eqref{eqn:dd_condition} and internally stable~\eqref{eqn:internal_stab} controller. We solve~\eqref{eqn:unified_framework} with $J_{H_2} = \Tr(E^\top W_oE)$ and $\Pi=H^\top H+\epsilon I$ using Algorithm~\ref{alg:successive_linear}. 
\reviewiii{We solve a modified~\eqref{eqn:unified_framework} by replacing~\eqref{subeqn:unified_dd} with its rre form to satisfy  Theorem~\ref{thm:rcq}'s assumptions.}
We run $20$ Monte Carlo (MC) trials with each trial $k$ initialized with  randomized system parameters $M_i^k \sim \mc{N}(M_i, 1), \ \beta_i^k \sim \mc{N}(\beta_i, 1)$, initial states $x^k(0) \sim \mc{N}(\mathbf{1}_{6},I_{6\times6})$, and $d(t)\sim\mc{N}(0,1)$.
In Figure~\ref{fig: case1_study}, we plot the absolute difference in output $\vert{e(t)}\vert$~\eqref{eqn:dd_condition}, and the realized state trajectory $x(t)$ over $60$ seconds. We observe that $|e(t)|$ is numerically zero and the internal states converge. Therefore, controller is DD with internal stability.
\begin{figure}[]
\centering
\includegraphics[width=0.9\linewidth]{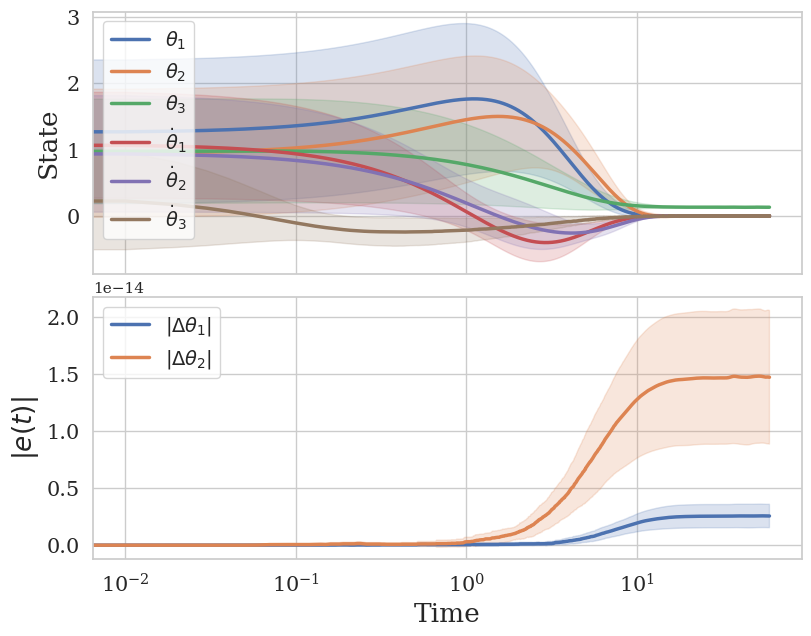}
\caption{The state evolution and output difference plot. }
\vspace{-8pt}
\label{fig: case1_study}
\end{figure}
\begin{table*}[]
\begin{minipage}[t]{0.52\textwidth}
\vspace{0pt}
\centering
\renewcommand{\arraystretch}{1.6}
\setlength{\tabcolsep}{1pt}
\begin{tabular}{|c|c|c|c|c|}
\hline
 & $f_{\alpha}$ & $f_{\mathrm{gain}}$ & $f_{H_2}$ & $f_{dd}$ \\
\hline
$F_{H_2}$ &
$\reviewi{-2e{-1} \!\pm\! 2e{-5}}$ &
$1e{5} \!\pm\! 1e{4}$ &
$4e{-14} \!\pm\! 1e{-14}$ &
$2e{-3} \!\pm\! 3e{-4}$ \\
\hline
$F_{dd-H_2}$ &
$\reviewi{-2e{-1} \!\pm\! 1e{-2}}$ &
$5e{1} \!\pm\! 8e{0}$ &
$\boldsymbol{8e{-25} \!\pm\! 9e{-25}}$ &
$\boldsymbol{7e{-13} \!\pm\! 5e{-13}}$ \\
\hline
$F_{\alpha}$ &
$\reviewi{\boldsymbol{-1e{0} \!\pm\! 1e{-1}}}$ &
$3e{1} \!\pm\! 5e{0}$ &
$3e{-18} \!\pm\! 3e{-18}$ &
$5e{-9} \!\pm\! 3e{-9}$ \\
\hline
$F_{gain}$ &
$\reviewi{-1e{-1} \!\pm\! 1e{-2}}$ &
$\boldsymbol{9e{-1} \!\pm\! 1e{-4}}$ &
$2e{-21} \!\pm\! 2e{-21}$ &
$5e{-12} \!\pm\! 2e{-12}$ \\
\hline
\end{tabular}
\vspace{3pt}
\caption{Controller performances comparison, shown as mean $\pm$ standard deviation. Each column's best mean value is bolded. }
\label{tab:controller_comparison}
\end{minipage}
\hfill
\begin{minipage}[t]{0.45\textwidth}
\vspace{0pt}
\raggedleft
\includegraphics[width=1\linewidth]{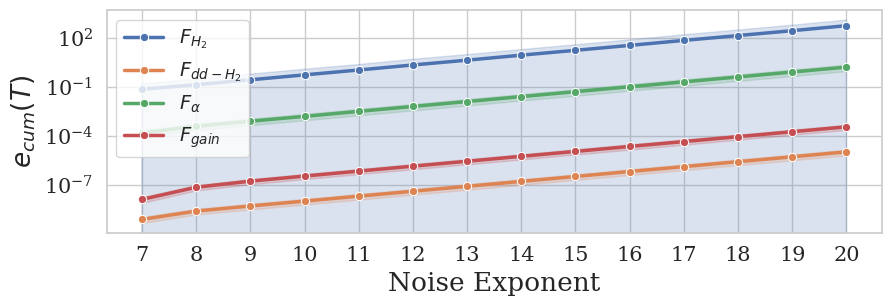}
\captionof{figure}{Cumulative time domain output error comparison over exponentially increasing noise variance. }
\label{fig:noise_scale}
\end{minipage}
\vspace{-8pt}
\end{table*}
\subsection{Case Study 2: DDPF vs SDP-$H_2$}\label{sec:simulation_case_2}
We show that \reviewiii{solving}~\eqref{eqn:unified_framework}, \reviewiii{ with~\eqref{subeqn:unified_dd} replaced by its rre form, }allows us to select from the set of DDPF feedback controls that optimize various performance criteria. To this end, we compare the performance of the following DDPF controllers under different criteria.
\begin{enumerate}
    \item $\boldsymbol{F_{H_2}}$: Solving~\eqref{eqn:h2_LMI} which minimizes $H_2$ norm via SDP.
    \item $\boldsymbol{F_{dd-H_2}}$: Solving~\eqref{eqn:unified_framework} for $J_{H_2} = \Tr(E^\top W_o E)$ and $\Pi = H^\top H+ \epsilon I$, which minimize $H_2$ norm.
    \item $\boldsymbol{F_\alpha}$: Solving~\eqref{eqn:unified_framework} for $J_\alpha = -\alpha$ and $\Pi = 2\alpha P$, which maximizes the system convergence rate.
    \item $\boldsymbol{F_{gain}}$: Solving~\eqref{eqn:unified_framework} for $J_{gain}=\norm{F}_2 $ and $\Pi = H^\top H+ \epsilon I$, which minimizes the control effort.
\end{enumerate}
Using the auxiliary optimization solutions $(\alpha, X, P)$ for each controller, we compare their performances via the metrics: $f_\alpha = -\alpha$, $ f_{H_2} = \Tr(E^\top W_o E)$, $f_{gain} = \norm{F}_2$, and $f_{dd} = \norm{VX - A_FV}_2$ ($f_{dd}=0$ certifies DD). For $\boldsymbol{F_{H_2}}$ which does not have auxiliary $\alpha, X,P$ solutions, we compute $\alpha$ as the maximum eigenvalue of $A_F$, $X = \argmin \norm{VX - (A+BF)V}_2 $, and $P$ by solving the observability Gramian. In Table~\ref{tab:controller_comparison}, we compare these metrics over $20$ randomized MC trials with randomized system parameters (see case study 1). For all four controllers, we also evaluate their  cumulative time domain error $e(T) = \int_{0}^T\norm{e(\tau)}_2d\tau$~\eqref{eqn:dd_condition} under increasing noise variance $d(t)\sim\mc{N}(0,2^l),l\in\{7, \dots, 20\}$, and compare $e_z(T)$ after $T=10s$.



\textbf{Discussions.}
We see that compared to $\boldsymbol{F_{H_2}}$, all controllers with subspace constraint~\eqref{eqn:dd_necessary_sufficient} achieved at least $1e4$ higher rates of $H_2$ norm minimization and at least $1e6$ higher rates of DD subspace condition satisfaction. We can also observe this in the cumulative error $\int_{0}^T\norm{e(\tau)}_2d\tau$ over increasingly larger noise magnitudes in Figure~\ref{fig:noise_scale}.
While \emph{all} controllers incur nonzero error over larger magnitudes, controllers $\boldsymbol{F_{dd-H_2}}$, $\boldsymbol{F_\alpha}$, and $\boldsymbol{F_{gain}}$  have significantly lower cumulative errors than $F_{H_2}$. 
\reviewii{Although the $H_2$ norm differences between SDP-based $H_2$ and DDPF controllers are extremely small, these differences have out-sized impacts in a larger power networks, where noise magnitudes accumulate from additional noise sources and over time. Consequently, the SDP-based $H_2$ controlled system output deviates from nominal system output more significantly, as shown in Figure 4.} 

In Table~\ref{tab:controller_comparison}, we observe $F_{dd-H_2}, F_\alpha, F_{gain}$ all outperform $H_2$ and achieve the lowest $f_{H_2}, f_\alpha, f_{gain}$, respectively. More critically, the control effort achieved is at least $1e4$ times lower than $\boldsymbol{F_{H_2}}$. This suggests that the optimization program~\eqref{eqn:unified_framework} produces more realizable $H_2$ control than classic SDP-based $H_2$ minimization. 
\bibliographystyle{IEEEtran}
\bibliography{reference}

\end{document}